\documentclass{ijmart}

\usepackage{amsmath,amssymb,mathtools}
\usepackage[breaklinks=true,hidelinks]{hyperref}
\usepackage{microtype}
\usepackage{xcolor}

\makeatletter
\fancypagestyle{firstpage}{%
  \fancyhf{}%
  \cfoot{\thepage}}
\makeatother
\newtheorem{thm}{Theorem}[section]
\newtheorem{lem}[thm]{Lemma}
\newtheorem{prop}[thm]{Proposition}
\newtheorem{cor}[thm]{Corollary}
\theoremstyle{remark}

\newcommand{\conv}{\operatorname{conv}}
\newcommand{\dist}{\operatorname{dist}}
\newcommand{\diam}{\operatorname{diam}}
\newcommand{\vertx}{\operatorname{vert}}
\newcommand{\one}{\mathbf 1}

\begin{document}

\title{Facial distance and diameter}

\author[A. Deza]{Antoine Deza}
\address{McMaster University, Hamilton, Ontario, Canada}
\email{deza@mcmaster.ca}

\author[D. Martinez-Rubio]{David Mart{\'i}nez-Rubio}
\address{IMDEA Software Institute, Madrid, Spain}
\email{david.martinezrubio@imdea.org}

\author[J. F. Pena]{Javier F. Pe{\~n}a}
\address{Tepper School of Business, Carnegie Mellon University, Pittsburgh, Pennsylvania, USA}
\email{jfp@andrew.cmu.edu}

\author[E. Wirth]{Elias Wirth}
\address{The Voleon Group}
\email{wirth.elias.samuel@gmail.com}

\begin{abstract}
The facial distance of a polytope, equivalently its pyramidal width, is a
geometric condition measure that appears in the convergence bounds for 
Frank--Wolfe algorithms. Pe\~na and Wirth developed
calculus rules for this quantity and conjectured that the ratio of the facial distance 
to the diameter multiplied by the square root of the dimension is bounded by a constant. We show that this quantity is uniquely maximized by a regular simplex in each dimension, which implies that the conjecture holds and that $\sqrt{2}$ is the smallest constant. 
We  also compute the facial distance of the Birkhoff polytope.
\end{abstract}

\maketitle

\section{Introduction and main theorem}\label{sec:intro}

The facial distance is a geometric parameter measuring the separation between
the faces of a polytope and the convex hull of its remaining vertices.
Related distances between disjoint lattice polytopes arise in discrete geometry; 
see, for example, the study of kissing polytopes
\cite{DezaOnnPokuttaPournin2024,DezaLiuPournin2025,DezaLiuPournin2026}.

Lacoste-Julien and Jaggi \cite{LacosteJulienJaggi2015} introduced the
\emph{pyramidal width} in the analysis of Frank--Wolfe methods, and
Pe\~na and Rodr\'iguez \cite{PenaRodriguez2019} subsequently showed that it
coincides with the facial distance. 
The ratio of the facial distance to diameter provides a {\em condition measure} of a polytope that is central to the convergence analysis of various variants of the Frank--Wolfe method as featured in the articles~\cite{BomzeRinaldiZeffiro2024, LacosteJulienJaggi2015,IommazzoEtAl2026, PenaRodriguez2019, WirthPenaPokutta2026} and also discussed in the recent monograph of Braun et al. \cite{BraunEtAl2025}.

Pe\~na and Wirth \cite{PenaWirth2026} developed calculus rules for
the facial distance under products and linear transformations and in terms of
facet-exposing directions. Their results give exact values and bounds
for several families of polytopes.  The examples in~\cite{PenaWirth2026} led to the following conjecture: 

\begin{quote}
There exists
a universal constant $C$ such that every $d$-dimensional polytope $P$ satisfies
\begin{equation*}
 \frac{\Phi(P)}{\diam(P)}\le \frac{C}{\sqrt d}.
\end{equation*}
\end{quote}
Here $\Phi(P)$ and $\diam(P)$ denote the facial distance and diameter of $P$ respectively as recalled below.

Theorem~\ref{thm:main} proves the sharp dimension-dependent form of this conjecture. 
 Section~\ref{sec:proof} is devoted to its proof, which relies on a novel {\em face-nesting} construction.  
 Section~\ref{sec:birkhoff} considers the Birkhoff polytope and gives a new exact expression for its facial distance.

Throughout, $P$ denotes a $d$-dimensional polytope with
$d\ge2$, $\mathcal F$ denotes the set of its proper faces, that is, the faces of $P$ excluding $\emptyset$ and $P$,  and $\vertx(P)$ denotes the set of vertices of $P$.  We shall also assume that the ambient space of $P$ is endowed with the Euclidean norm.  Recall that the facial distance and diameter of $P$ are respectively defined as follows
$$
 \Phi(P)=\min_{F\in\mathcal F}
 \dist\bigl(F,\conv(\vertx(P)\setminus F)\bigr).
$$
and
$$
\diam(P) = \max_{x,y\in P} \|x-y\|.
$$

\begin{thm}\label{thm:main}
Let $P$ be a $d$-dimensional polytope. Then
\[
 \frac{\Phi(P)}{\diam(P)}
 \le
 \sqrt{\frac{d+1}{2\left\lfloor(d+1)^2/4\right\rfloor}},
\]
with equality if and only if $P$ is a regular simplex.
\end{thm}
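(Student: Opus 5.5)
The plan has three parts: compute the regular simplex exactly, reduce a general $P$ to a simplex spanned by some of its vertices, and then prove a distance inequality for that simplex. The last part is where I expect the real difficulty.

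\emph{The regular simplex.} Let $n=d+1$ and take the regular simplex with vertices $e_i/\sqrt2$ in $\mathbb R^n$, so its edge length and diameter are $1$. Every proper face $F$ is spanned by $k$ vertices, $1\le k\le n-1$, and the complementary vertices span the opposite face, with $m=n-k$ vertices. By symmetry the closest pair of points is the pair of centroids, whose difference is $\tfrac1{\sqrt2}(\tfrac1k\one_S-\tfrac1m\one_{S^c})$. Its squared norm is $\tfrac12(\tfrac1k+\tfrac1m)=\tfrac{n}{2km}$. This is minimized when $km$ is as large as possible, namely $km=\lfloor n^2/4\rfloor$, which gives exactly the bound in Theorem~\ref{thm:main}. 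So equality holds for regular simplices, and it remains to prove the inequality for general $P$ and to characterize equality.

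\emph{Reduction to a simplex (face nesting).} Set $k=\lfloor n/2\rfloor$ and $m=n-k$. Choose a flag of faces $F_0\subset F_1\subset\cdots\subset F_{d-1}$ of $P$ with $\dim F_j=j$. Choose vertices $v_j\in F_j\setminus F_{j-1}$ (with $v_d\notin F_{d-1}$). These $n$ vertices are affinely independent and span a $d$-simplex $S$ whose diameter is at most $\diam(P)$. For every $j$, the set $A_j=\{v_0,\dots,v_j\}$ lies in $F_j$, and $B_j=\{v_{j+1},\dots,v_d\}\subset \vertx(P)\setminus F_j$. Hence
\[
\Phi(P)\le \min_j \dist\bigl(\conv A_j,\conv B_j\bigr).
\]
The intended choice is the split with $|A_j|=k$. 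The flag and the vertices $v_j$ are still free, and I would fix them greedily by a farthest-point rule. Start with $v_0,v_1$ realizing the diameter, or nearly so, among vertices of some edge-flag. Then at each step pick the vertex of $F_j$ (or of $P$ for the last step) farthest from $\operatorname{aff}(v_0,\dots,v_{j-1})$, and pass to a face containing it. This keeps the simplex from degenerating.

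\emph{The simplex inequality (the main obstacle).} It remains to show that for the constructed simplex
\[
\dist(\conv A,\conv B)^2\le \tfrac{n}{2km}D^2,\qquad D=\diam(P).
\]
With the centroids $c_A,c_B$ one has the identity
\[
\|c_A-c_B\|^2=\frac1{km}\sum_{a,b}\|a-b\|^2-\frac1{2k^2}\sum_{a,a'}\|a-a'\|^2-\frac1{2m^2}\sum_{b,b'}\|b-b'\|^2 .
\]
The cross term is at most $D^2$. The subtracted terms equal $D^2(\tfrac{k-1}{2k}+\tfrac{m-1}{2m})$ exactly when all intra-group distances equal $D$, and then the right side is $\tfrac n{2km}D^2$. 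The difficulty is that an arbitrary split can have short intra-group edges. For example, two short skew segments in $\mathbb R^3$ can lie at distance close to $D$. So the proof must exploit either the freedom in choosing the flag or the minimum over several splits $j$. My first attempt would be to replace centroids by optimally weighted points. I would then bound the resulting distance by the heights produced by the farthest-point rule, and compare with a Jung-type estimate: the circumradius of a set of diameter $D$ is at most $D\sqrt{d/(2(d+1))}$, with equality only for the regular simplex. Failing that, I would average the inequality over all balanced splits reachable by reordering the flag. Once the inequality holds, equality forces every pairwise distance among the $v_i$ to equal $D$ and every choice of face to realize the bound. From this I would argue that $P=S$ is a regular simplex.
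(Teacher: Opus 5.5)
\textbf{There is a genuine gap.} Your regular-simplex computation is correct. So is the reduction $\Phi(P)\le\min_j\dist(\conv A_j,\conv B_j)$ for a flag with $v_j\in F_j\setminus F_{j-1}$. But the proposal stops where the proof has to happen: the inequality $\dist(\conv A,\conv B)^2\le\frac{n}{2km}D^2$ is never proved. You give only a list of things you would try. As you note yourself, the centroid identity fails for a fixed split, because it needs lower bounds on intra-group distances. Nothing in the proposal connects the farthest-point rule or Jung's theorem to the distance between the two groups. Averaging is the right instinct: the average of $\|c_A-c_B\|^2$ over all $k$-subsets of $n$ points is indeed at most $\frac{n}{2km}D^2$. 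However, in a general polytope a flag cannot be reordered freely, and you give no description of which splits are admissible. The equality case has a second gap. Equality would at best make the $d+1$ chosen vertices a regular simplex, which does not give $P=S$.

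The missing idea is to build the flag from both ends at once, two vertices at a time, with a free swap at each step. Maintain faces $L\subseteq U$. While $\dim U-\dim L\ge2$, Lemma~\ref{lem:pair} provides vertices $a,b$ with a choice of orientation. Either $a$ goes into a face $f_a\supsetneq L$ of dimension $\dim L+1$ and $b$ lies outside a facet $h_b\supseteq f_a$ of $U$, or the roles are reversed. Let $s$ be the sum of kept minus discarded vertices, and choose the orientation via $\min\|s\pm(a-b)\|^2\le\|s\|^2+\|a-b\|^2$. This yields $\|p-q\|^2\le\frac1{k^2}\sum_i\|p_i-q_i\|^2\le D^2/k$ for the averages $p\in L$ and $q\in\conv(\vertx(P)\setminus L)$. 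That is the sharp bound when $d=2k-1$, and no intra-group distance is needed. The result is a flag of your type ($v_j=p_{j+1}$, $v_{d+1-t}=q_t$), so your framework contains the construction but misses the pairing.

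For $d=2k$ the pairing only gives $D^2/k$, and the paper needs an extra step. It picks $c\in\vertx(U)\setminus L$ and averages the candidate squared distances for the two faces $L$ and $U$. It then uses a variance identity with the minimum vertex distance $\rho$. Finally, the paper takes a globally closest pair of vertices of $P$ as the first pair. Equality then forces $\rho=\diam(P)$, so all vertices of $P$ are equidistant, which settles the equality case.
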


\section{Proof of Theorem~\ref{thm:main}}\label{sec:proof}

The following {\em face nesting} technique provides a key component of the proof of Theorem~\ref{thm:main}.  For a face $F$ of $P$, we shall follow the convention that $\dim(F) = -1$ when $F=\emptyset.$

\begin{lem}\label{lem:pair}
Let $L\subseteq U$ be faces of $P$ with $\dim U-\dim L\ge2$. There are distinct
vertices $a,b$ and faces $f_a,f_b,h_a,h_b$  of $U$ such that
\[
 L\subsetneq f_a\subseteq h_b\subsetneq U,
 \qquad
 L\subsetneq f_b\subseteq h_a\subsetneq U,
\]
$a\in f_a\setminus h_a$, $b\in f_b\setminus h_b$, and 
\[
\dim(f_a) = \dim(f_b) = \dim(L)+1, \qquad \dim(h_a) = \dim(h_b) = \dim(U)-1.
\]
Consider the following two possible updates on $(L,U)$:
\[
\begin{array}{lll}
\text{keep $a$, discard $b$:}&(L,U)\leftarrow(f_a,h_b),\\[1mm]
\text{keep $b$, discard $a$:}&(L,U)\leftarrow(f_b,h_a).
\end{array}
\]
In either update $\dim L$ increases by one and $\dim U$ decreases by one.
When $L=\emptyset$, $a$ and $b$ may be any two distinct vertices of $U$.
\end{lem}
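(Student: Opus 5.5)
Our approach is to pass to the face lattice of the interval $[L,U]$, which is the face lattice of a polytope $Q$ of dimension $k=\dim U-\dim L-1\ge1$: the quotient (face figure) $U/L$. When $L=\emptyset$ we have $Q=U$. Faces $G$ with $L\subseteq G\subseteq U$ correspond to faces of $Q$ of dimension $\dim G-\dim L-1$. Under this correspondence, faces $f\supsetneq L$ with $\dim f=\dim L+1$ become vertices of $Q$, and faces $h\subsetneq U$ with $\dim h=\dim U-1$ become facets of $Q$. We need two distinct vertices $v_a,v_b$ of $Q$ and facets $H_a,H_b$ of $Q$ such that $v_a\in H_b\setminus H_a$ and $v_b\in H_a\setminus H_b$. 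Since $k\ge1$, $Q$ has at least two vertices.

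\textbf{Step 1 (combinatorial choice in $Q$).} Take any two distinct vertices $v_a,v_b$ of $Q$; if $L=\emptyset$ these are arbitrary vertices of $U$. If $k=1$, $Q$ is a segment: put $H_a=\{v_b\}$ and $H_b=\{v_a\}$. For $k\ge2$, every vertex of a polytope is the intersection of the facets containing it. Since $v_b\neq v_a$, some facet $H_a$ contains $v_b$ but not $v_a$, and symmetrically some facet $H_b$ contains $v_a$ but not $v_b$. This gives $v_a\in H_b\setminus H_a$ and $v_b\in H_a\setminus H_b$. The inclusions $v_a\subseteq H_b$ and $v_b\subseteq H_a$ then translate back to $f_a\subseteq h_b$ and $f_b\subseteq h_a$, with the strict inclusions $L\subsetneq f$ and $h\subsetneq U$ automatic.

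\textbf{Step 2 (choosing actual vertices $a,b$ of $P$).} Let $f_a,f_b,h_a,h_b$ be the faces of $U$ corresponding to $v_a,v_b,H_a,H_b$. Since $f_a\not\subseteq h_a$ and both are faces, $f_a\cap h_a$ is a face of $f_a$ different from $f_a$. So $f_a$ has a vertex $a\notin h_a$, because a face is the convex hull of its vertices. Likewise pick a vertex $b\in f_b\setminus h_b$.

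For distinctness, note $a\in f_a\subseteq h_b$ while $b\notin h_b$, so $a\ne b$. When $L=\emptyset$, $f_a=\{a\}$ and $f_b=\{b\}$, so $a,b$ are the prescribed arbitrary distinct vertices. The dimension claims hold by construction, and the update statement is immediate: $\dim f=\dim L+1$ and $\dim h=\dim U-1$.

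\textbf{Main obstacle.} The only nontrivial input is the face-figure correspondence, namely that $[L,U]$ is the face lattice of a $k$-polytope. For $L=\emptyset$ this is just $U$. For $L\ne\emptyset$, take $U\cap H$ with $H$ a hyperplane strictly separating a relative-interior point of $L$ direction-wise, i.e. the standard vertex-figure construction iterated. Alternatively, one can avoid it by arguing directly in the lattice: the interval is atomic and coatomic, and each element is the meet of the coatoms above it (the diamond property plus the polytope structure of $U/L$). We would cite the standard face-figure result rather than reprove it.
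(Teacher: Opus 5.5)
Your argument is correct, but it takes a different route from the paper's. You use the face-figure theorem to identify the interval $[L,U]$ with the face lattice of a polytope $Q$ of dimension $\dim U-\dim L-1$. This reduces every case uniformly to $L=\emptyset$: you separate two distinct vertices of $Q$ by facets, exactly as the paper does when $L=\emptyset$. Only afterwards do you pick $a\in f_a\setminus h_a$ and $b\in f_b\setminus h_b$, and you get $a\neq b$ from $a\in h_b\not\ni b$.

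The paper never leaves $U$. It picks a cover $f_a$ of $L$ and a vertex $a\in f_a\setminus L$, and separates $a$ from $L$ by a facet $h_a\supseteq L$ of $U$. It then takes a cover $f_b\subseteq h_a$ of $L$ with a vertex $b\in f_b\setminus L$, deduces $b\notin f_a$ from $f_a\cap f_b=L$, and separates $b$ from $f_a$ by a facet $h_b$.

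Your version is more symmetric, since any two distinct covers of $L$ in $U$ can serve as $f_a,f_b$. The price is a heavier external result. Your sketch of how to build the face figure (``a hyperplane strictly separating a relative-interior point of $L$ direction-wise'') is not a construction as written. The standard constructions iterate vertex figures, $U/L\cong(U/v)/(L/v)$ for a vertex $v$ of $L$, or use polarity.

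In fact the face figure is unnecessary. Given distinct covers $f_a\neq f_b$ of $L$, the proper face $f_b$ is the intersection of the facets of $U$ containing it, and all of these facets contain $L$. Since $f_a\not\subseteq f_b$, one of these facets does not contain $f_a$, and the same argument works with $a$ and $b$ swapped. Two distinct covers exist by the diamond property. With this remark your proof becomes as elementary and self-contained as the paper's.
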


\begin{proof}
First assume $L\ne\emptyset$. Choose a face $f_a$ with
$L\subsetneq f_a\subsetneq U$, $\dim f_a=\dim L+1$, and a vertex
$a\in f_a\setminus L$. Since $a\notin L$ and $L$ is the intersection of the
facets of $U$ that contain it, some such facet $h_a$ does not contain $a$.
Because $\dim h_a\ge\dim L+1$,  we can choose a face $f_b$ of $h_a$ with
$L\subsetneq f_b$ and $\dim f_b=\dim L+1$, and a vertex $b\in f_b\setminus L$.
The faces $f_a$ and $f_b$ are distinct, and $L$ is a facet of each; hence
$f_a\cap f_b=L$. Thus $b\notin f_a$. Since $f_a$ is a proper face of $U$,
some facet $h_b$ of $U$ contains $f_a$ but not $b$.

If $L=\emptyset$ then the construction of $a,b$ and $f_a,f_b,h_a,h_b$ is simpler. Take any distinct vertices $a,b$ of $U$, put
$f_a=\{a\}$ and $f_b=\{b\}$. Since $a,b$ are distinct vertices of $U$, it follows that some facet $h_a$ of $U$ must contain $b$ but not
$a$, and some facet $h_b$ must contain $a$ but not $b$. 
\end{proof}

\begin{proof}[Proof of Theorem~\ref{thm:main}]
Let $\rho=\min\{\|u-v\|:u,v\in\vertx(P),\ u\ne v\}$.
Start with
$ L=\emptyset, U=P, s=0$,
and iterate the construction of Lemma~\ref{lem:pair}, choosing a closest pair  of vertices $a,b\in P$  in
the first iteration. Maintain the invariant that every kept vertex lies in $L$,
every discarded vertex lies outside $U$, and $s$ is the sum of the kept
vertices minus the sum of the discarded vertices. At each iteration the
parallelogram identity gives
\[
 \min\{\|s+(a-b)\|^2,\|s-(a-b)\|^2\}
 \le \|s\|^2+\|a-b\|^2.
\]
Update the pair $(L,U)$ as in
Lemma~\ref{lem:pair} 
choosing the update that gives the smaller norm on the updated $s$. If the kept and discarded vertices are denoted by
$p_i,q_i$, respectively, then after $t$ iterations
\[
 \|s\|^2 = \|\sum_{i=1}^t(p_i-q_i)\|^2 \le\sum_{i=1}^t\|p_i-q_i\|^2,
\]
where the first summand in the last expression is $\rho^2$ and every subsequent summand is at most
$\diam(P)^2$.\\

\noindent
{\sc Case 1: $d$ is odd}. Iterate the construction
$\lfloor(d+1)/2\rfloor$ times. Then the final lower and upper faces coincide.
If $p$ and $q$ are the averages of the kept and discarded vertices
respectively, then  $p\in L \in \mathcal F$, $q\in \conv(\vertx(P)\setminus L)$ and so
\[
 \Phi(P)^2\le\|p-q\|^2
 =\frac{\|s\|^2}{\lfloor(d+1)/2\rfloor^2}
 \le\frac{\rho^2+(\lfloor(d+1)/2\rfloor-1)\diam(P)^2}
 {\lfloor(d+1)/2\rfloor^2}
 \le\frac{\diam(P)^2}{\lfloor(d+1)/2\rfloor}.
\]

\noindent
{\sc Case 2: $d$ is even}. Iterate the construction
$\lfloor d/2\rfloor$ times. Let $p$ and $q$ be the averages of the kept and
discarded vertices, respectively. At termination,
$
 \dim L=\lfloor d/2\rfloor-1$ and $\dim U=\lfloor d/2\rfloor
$
with $p\in L$ and $q\in\conv(\vertx(P)\setminus U)$. Moreover,
\begin{equation}\label{eq:even-sharp-balancing}
 \|p-q\|^2
 \le \frac{\rho^2+(\lfloor d/2\rfloor-1)\diam(P)^2}
 {\lfloor d/2\rfloor^2}.
\end{equation}
Thus proceeding as in the case when $d$ is odd, we get the upper bound
\[
 \Phi(P)^2\le \frac{\diam(P)^2}
 {\lfloor d/2\rfloor}.
\]
With a bit more work, we can sharpen this upper bound as we next show.

Choose $c\in\vertx(U)\setminus L$. The $2\lfloor d/2\rfloor$ selected vertices
$p_1,\ldots,p_{\lfloor d/2\rfloor},q_1,\ldots,q_{\lfloor d/2\rfloor}$ are
distinct: at each iteration the new vertices lie in the current upper face and
outside the current lower face, whereas all previously kept vertices lie in
the lower face and all previously discarded vertices lie outside the upper
face. Their mean is $(p+q)/2$, and therefore
\begin{equation}\label{eq:even-sharp-variance}
\begin{aligned}
 \diam(P)^2
 &\ge \frac1{2\lfloor d/2\rfloor}
 \sum_{i=1}^{\lfloor d/2\rfloor}
 \bigl(\|p_i-c\|^2+\|q_i-c\|^2\bigr)\\
 &=\left\|\frac{p+q}{2}-c\right\|^2+
   \frac1{4\lfloor d/2\rfloor^2}
   \left(
   \sum_{1\le i<j\le \lfloor d/2\rfloor}
   \bigl(\|p_i-p_j\|^2+\|q_i-q_j\|^2\bigr)\right.\\
 &\hspace{43mm}\left.
   +\sum_{i,j=1}^{\lfloor d/2\rfloor}\|p_i-q_j\|^2
   \right)\\
 &\ge \left\|\frac{p+q}{2}-c\right\|^2
   +\frac{2\lfloor d/2\rfloor-1}{4\lfloor d/2\rfloor}\rho^2.
\end{aligned}
\end{equation}
There are $\lfloor d/2\rfloor(2\lfloor d/2\rfloor-1)$ terms in the two sums,
and each is at least $\rho^2$.
The vertices supporting $q$ lie outside $U$, hence outside $L$, and
$c\in U\setminus L$. Therefore
\[
 \frac{\lfloor d/2\rfloor q+c}{\lfloor d/2\rfloor+1}
 \in\conv(\vertx(P)\setminus L),
 \qquad
 \frac{\lfloor d/2\rfloor p+c}{\lfloor d/2\rfloor+1}\in U.
\]
Thus the faces $L$ and $U$ give two candidate distances.  Since $p\in L$ and $q\in \conv(\vertx(P)\setminus U)$,  averaging their
square distances gives
\begin{equation}\label{eq:even-sharp-candidates}
\begin{aligned}
 \Phi(P)^2 & \le \frac12\dist(L,\conv(\vertx(P)\setminus L))^2 + \frac12\dist(U,\conv(\vertx(P)\setminus U))^2
 \\
 &\le \frac12\left\|p-
 \frac{\lfloor d/2\rfloor q+c}{\lfloor d/2\rfloor+1}\right\|^2
 +\frac12\left\|q-
 \frac{\lfloor d/2\rfloor p+c}{\lfloor d/2\rfloor+1}\right\|^2\\
 &=\frac{(2\lfloor d/2\rfloor+1)^2}
 {4(\lfloor d/2\rfloor+1)^2}\|p-q\|^2
 +\frac1{(\lfloor d/2\rfloor+1)^2}
 \left\|\frac{p+q}{2}-c\right\|^2.
\end{aligned}
\end{equation}
Substituting \eqref{eq:even-sharp-balancing} and
\eqref{eq:even-sharp-variance} into \eqref{eq:even-sharp-candidates} yields
\[
\begin{aligned}
 \Phi(P)^2
 &\le \frac{(2\lfloor d/2\rfloor+1)^2}
 {4\lfloor d/2\rfloor^2(\lfloor d/2\rfloor+1)^2}
 \bigl(\rho^2+(\lfloor d/2\rfloor-1)\diam(P)^2\bigr)\\
 &\qquad +\frac1{(\lfloor d/2\rfloor+1)^2}
 \left(\diam(P)^2-
 \frac{2\lfloor d/2\rfloor-1}{4\lfloor d/2\rfloor}\rho^2\right).
 \end{aligned}
 \]
Therefore
\[
\begin{aligned}
 \Phi(P)^2
 &\le 
 \frac{2\lfloor d/2\rfloor+1}
 {2\lfloor d/2\rfloor(\lfloor d/2\rfloor+1)}\diam(P)^2\\
 &\qquad -
 \frac{2\lfloor d/2\rfloor^2+5\lfloor d/2\rfloor+1}
 {4\lfloor d/2\rfloor^2(\lfloor d/2\rfloor+1)^2}
 \bigl(\diam(P)^2-\rho^2\bigr)\\
 &\le \frac{2\lfloor d/2\rfloor+1}
 {2\lfloor d/2\rfloor(\lfloor d/2\rfloor+1)}\diam(P)^2
 =\frac{2(d+1)}{d(d+2)}\diam(P)^2.
\end{aligned}
\]
Hence the upper bound in Theorem~\ref{thm:main} holds regardless of the parity of $d$.

\medskip

We next show that equality in the bound holds if and only if $P$ is a regular simplex.
In both cases  ($d$ odd and $d$ even) equality in the bound forces $\rho=\diam(P)$. Hence equality in the upper bound implies that the
vertices of $P$ are pairwise equidistant, so  $P$ can only be a regular simplex.

Conversely, let $S$ be a regular $d$-simplex. For complementary faces with
$r$ and $d+1-r$ vertices, the segment joining their barycenters is orthogonal
to both faces and has squared length
\[
 \frac{d+1}{2r(d+1-r)}\diam(S)^2.
\]
This is minimized for $r=\lfloor(d+1)/2\rfloor$, giving
\[
 \frac{\Phi(S)}{\diam(S)}
 =\sqrt{\frac{d+1}{2\left\lfloor(d+1)^2/4\right\rfloor}}.
\]
\end{proof}

Pe\~na and Wirth \cite[Definition~2.1]{PenaWirth2026}
define the local facial distance $\Phi(F,P)$ for a nonempty face
$F$ of $P$ as follows
\[
\Phi(F,P) = \min_{G\in\mathcal F, G\subseteq F}
 \dist\bigl(G,\conv(\vertx(P)\setminus G)\bigr).
\] 
Corollary~\ref{cor:relative}
is the corresponding local version of Theorem~\ref{thm:main} applied to $F$.   Its proof is nearly identical to the proof of Theorem~\ref{thm:main} but starting with $U=F$ in lieu of $U=P$.

\begin{cor}\label{cor:relative}
Let $F$ be an $i$-dimensional face of $P$, with $i\ge1$. Then
\[
 \frac{\Phi(F,P)}{\diam(F)}
 \le
 \sqrt{\frac{i+1}{2\left\lfloor(i+1)^2/4\right\rfloor}}.
\]
\end{cor}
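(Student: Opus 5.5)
We rerun the proof of Theorem~\ref{thm:main} with the initial pair $(L,U)=(\emptyset,F)$ instead of $(\emptyset,P)$, working inside the face lattice of $F$ (itself an $i$-dimensional polytope whose faces are faces of $P$). Let $\rho_F$ be the minimum distance between distinct vertices of $F$, and pick a closest pair $a,b\in\vertx(F)$ in the first iteration. Lemma~\ref{lem:pair} applies to any faces $L\subseteq U$ of $P$ with $\dim U-\dim L\ge2$. Since $U\subseteq F$ throughout, all faces $f_a,f_b,h_a,h_b$ it produces are faces of $F$, and the vertices $a,b$ are vertices of $F$. The invariant is unchanged: kept vertices lie in $L$, discarded vertices lie in $F\setminus U$, and the parallelogram choice keeps $\|s\|^2\le\rho_F^2+(t-1)\diam(F)^2$, because every pair of selected vertices lies in $F$.

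The one step that needs checking is that the resulting candidate distances bound $\Phi(F,P)$, which is a minimum over proper faces $G\subseteq F$ of $\dist(G,\conv(\vertx(P)\setminus G))$. Every face $L$ or $U$ used at termination is a proper face of $F$, hence a member of $\mathcal F$ contained in $F$. The discarded vertices and the auxiliary vertex $c$ lie in $\vertx(F)\setminus G\subseteq\vertx(P)\setminus G$. So each point $q$, or $(\lfloor i/2\rfloor q+c)/(\lfloor i/2\rfloor+1)$, lies in $\conv(\vertx(P)\setminus G)$, and the distance estimates carry over verbatim with $d$ replaced by $i$.

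For $i$ odd, after $(i+1)/2$ iterations $L=U$ is a proper face of $F$, and
\[
\Phi(F,P)^2\le \frac{\diam(F)^2}{(i+1)/2}=\frac{2\diam(F)^2}{i+1}.
\]
For $i$ even, stop after $i/2$ iterations with $\dim L=i/2-1$ and $\dim U=i/2$. Then rerun the variance bound \eqref{eq:even-sharp-variance} and the two-candidate averaging \eqref{eq:even-sharp-candidates}, using $\diam(F)$ and $\rho_F$, with $c\in\vertx(U)\setminus L$. This gives
\[
\Phi(F,P)^2\le \frac{2(i+1)}{i(i+2)}\diam(F)^2.
\]
In both cases the bound equals $\frac{i+1}{2\lfloor (i+1)^2/4\rfloor}\diam(F)^2$.

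The main obstacle is the edge case $i=1$, where $F$ is an edge. The even/odd scheme gives a single iteration with $L=U$ equal to a vertex of $F$, and the bound $\Phi(F,P)\le\diam(F)$ follows directly: the other endpoint lies in $\conv(\vertx(P)\setminus\{a\})$. For $i=2$ we need $U$ to be a proper face of $F$ (an edge) and $L$ a vertex, which is exactly what Lemma~\ref{lem:pair} supplies. In both cases the faces involved are proper faces of $F$ as required. No equality characterization is claimed, so we need not discuss sharpness.
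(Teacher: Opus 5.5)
Your proposal is correct and is exactly the paper's argument: the paper proves the corollary by rerunning the proof of Theorem~\ref{thm:main} with the initial pair $(\emptyset,F)$ in place of $(\emptyset,P)$. As you say, all selected vertices then lie in $F$, $\rho$ and $\diam(P)$ are replaced by the minimum vertex distance and the diameter of $F$, and the terminal faces $L,U$ are proper faces of $F$, hence admissible in the minimum defining $\Phi(F,P)$. One harmless wording slip: $c$ lies in $U$, so it enters $\conv(\vertx(P)\setminus G)$ only for $G=L$; for $G=U$ it is used on the face side through $(\lfloor i/2\rfloor p+c)/(\lfloor i/2\rfloor+1)\in U$, just as in \eqref{eq:even-sharp-candidates}.
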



\section{The Birkhoff polytope}\label{sec:birkhoff}

Pe\~na and Wirth \cite[Example~6.4]{PenaWirth2026} show that
$\Phi(B_n)\ge1/\sqrt{n^2-1}$  for the Birkhoff polytope $B_n$ described below. The next result gives the exact value of $\Phi(B_n)$.

\begin{prop}\label{prop:birkhoff}
Let $n\ge3$ and 
$
 B_n=\{X\in\mathbb R^{n\times n}:X\ge0,\ X\one=\one,\ X^T\one=\one\}
$
with the Frobenius norm. Then
\[
 \Phi(B_n)=\frac1{\sqrt{\lfloor n^2/4\rfloor}}.
\]
\end{prop}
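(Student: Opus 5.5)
The plan has two halves. For the upper bound $\Phi(B_n)\le 1/\sqrt{\lfloor n^2/4\rfloor}$ I would exhibit one face $F$ of $B_n$ together with explicit points $X\in F$ and $Y\in\conv(\vertx(B_n)\setminus F)$ at that distance. For the lower bound, valid for every proper face, I would use a separating linear functional. I have not yet identified the extremal face; the computations below rule out the most natural candidates, so locating it is the first task.

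\emph{Setup.} The vertices of $B_n$ are the permutation matrices, and every face has the form $F_Z=\{X\in B_n: X_{ij}=0 \text{ for }(i,j)\in Z\}$. A vertex $\Pi\notin F_Z$ has at least one entry equal to $1$ on $Z$. Distances may be measured inside the affine hull of $B_n$, whose linear part is $T=\{D: D\one=0,\ D^T\one=0\}$. Write $\pi_T$ for the orthogonal projection onto $T$; explicitly, $\pi_T(M)$ subtracts the row means and column means of $M$ and adds back its total mean.

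\emph{Lower bound.} For each face $F_Z$ I would choose a weight matrix $M\ge0$ supported on $Z$ such that $\langle M,\Pi\rangle\ge1$ for every permutation $\Pi\notin F_Z$. Since $\langle M,X\rangle=0$ on $F_Z$, any $X\in F_Z$ and $Y\in\conv(\vertx(B_n)\setminus F_Z)$ satisfy
\[
1\le\langle M,Y-X\rangle=\langle \pi_T(M),Y-X\rangle\le\|\pi_T(M)\|\,\|Y-X\|,
\]
so the distance is at least $1/\|\pi_T(M)\|$. The goal is then to show that every $Z$ admits a feasible $M$ with $\|\pi_T(M)\|^2\le\lfloor n^2/4\rfloor$. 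I would first try $M=\one_{Z'}$ for a suitable subset $Z'\subseteq Z$, and fall back on LP duality (weights dual to a covering of the permutations outside $F_Z$) when no simple subset works.

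\emph{Upper bound, first candidates.} I would test the faces cut out by bipartite row/column splits. Take $Z=A\times C^c$ with $|A|=|C|=k$, and put $u=\one_A$, $v=\one_C$. A direct computation gives $\pi_T(\one_Z)=-(u-\tfrac kn\one)(v-\tfrac kn\one)^T$, hence $\|\pi_T(\one_Z)\|^2=k^2(n-k)^2/n^2$. This yields the lower bound $n/(k(n-k))$ for that face, which is larger than the target. So block faces are not extremal, and the minimizing face must be finer: a face with few vertices (possibly a single permutation or an edge) facing a carefully weighted combination of nearby permutations. I would look for $X,Y$ as uniform averages over two families of permutations that differ by transpositions arranged in a $\lfloor n/2\rfloor\times\lceil n/2\rceil$ pattern, so that the $\lfloor n^2/4\rfloor$ cross terms produce the denominator. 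This is modelled on the odd-$d$ balancing step in the proof of Theorem~\ref{thm:main}.

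\emph{Main obstacle.} The hardest step is finding the extremal face and matching it with the dual certificate. The decisive check is whether the certificate $M$ from the lower-bound argument is tight, that is, parallel to $X-Y$ for the construction. The $n=3$ case should fix the pattern: there the target value is $1/\sqrt2$, and I would compute it by hand before attempting the general construction.
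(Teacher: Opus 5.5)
There is a genuine gap. The upper bound is not proved at all: you name no face and no pair $X,Y$. The lower bound also stops just before the one estimate that carries it.

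\emph{Lower bound.} Your framework is exactly right, and it needs neither subsets $Z'$ nor LP duality. Take $M=\one_Z$ with $Z$ the full zero set of the face. It is feasible because every permutation outside $F_Z$ has a $1$ on $Z$. With $R=I-\one\one^T/n$ one has $\pi_T(M)=RMR$, and since $R$ is an orthogonal projection,
\[
\|\pi_T(\one_Z)\|_F^2\le\|\one_Z R\|_F^2=\frac1n\sum_{i=1}^n k_i(n-k_i)\le\left\lfloor\frac{n^2}{4}\right\rfloor,
\]
where $k_i$ is the number of entries of $Z$ in row $i$. That single line is the entire lower bound.

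\emph{Upper bound.} The same display shows that your guess for the extremal face points the wrong way. For a vertex or an edge, every row has $n-1$ or $n-2$ entries in $Z$. The display then gives distance at least $1/\sqrt{2(n-2)}$, which exceeds the target once $n\ge6$. Attaining the target forces two things:
\begin{itemize}
\item equality in the display, i.e.\ $\lfloor n/2\rfloor$ or $\lceil n/2\rceil$ zeros in every row and constant column counts;
\item a tight certificate.
\end{itemize}
Your block faces show how the second can fail. For $k=n/2$, their full zero set $(A\times C^c)\cup(A^c\times C)$ attains the norm bound. However, every permutation outside the face meets $Z$ at least twice, which is why you got $4/n$ rather than $2/n$.

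The missing idea is a zero set with $\lfloor n/2\rfloor$ entries in each row and column such that each $e\in Z$ is met by some permutation $P_e$ that meets $Z$ only at $e$. The paper uses the circulant set $Z=\{(i,i+s):1\le s\le\lfloor n/2\rfloor\}$, with indices mod $n$. For $e=(i,i+s)$, $P_e$ is the transposition $i\leftrightarrow i+s$, replaced by a $3$-cycle when $s=n/2$, since the transposition would then also hit $(i+s,i)\in Z$. The construction is:
\begin{itemize}
\item $Y$ is the uniform average of the $P_e$; it equals $1/(n\lfloor n/2\rfloor)$ on $Z$.
\item $X$ is obtained from $Y$ by setting it to $0$ on $Z$ and adding $1/(n\lceil n/2\rceil)$ off $Z$; it is doubly stochastic and lies in $F_Z$.
\end{itemize}
Then $Y-X$ is parallel to $R\one_ZR$, which is the tightness check you anticipated, and $\|Y-X\|_F^2=1/(\lfloor n/2\rfloor\lceil n/2\rceil)$. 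For $n=3$ this face happens to be an edge, so extrapolating from a hand computation at $n=3$ would have led you astray.
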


\begin{proof}
Consider a proper face $F$, and let
\[
 Z=\{(i,j):X_{ij}=0\text{ for every }X\in F\}.
\]
For $n\ge3$, within the affine space
\[
X\one=\one,\qquad X^T\one=\one,
\]
the facet-defining inequalities of $B_n$ are $X_{ij}\ge0$, with corresponding
facets $X_{ij}=0$. Since every face is the intersection of the facets containing it,
\[
F=B_n\cap\bigcap_{(i,j)\in Z}\{X_{ij}=0\}.
\]
Hence a permutation matrix $P$ lies outside $F$ if and only if there exists $(i,j)\in Z$ such that $P_{ij}=1$.
Let $A_{ij}=1$ on $Z$ and $0$ otherwise, and set $R=I-\one\one^T/n$.
If $X\in F$ and $Y\in\conv(\vertx(B_n)\setminus F)$, write $Y$ as a
convex combination of
permutation matrices outside $F$.  Each has an entry equal to $1$ for at least one $(i,j)\in Z$, while $X_{ij}=0$ for every $(i,j)\in Z$. Thus
\[
1\le\langle A,Y-X\rangle.
\]
Since $Y-X$ has zero row and column sums,
\[
\langle A,Y-X\rangle
=\langle RAR,Y-X\rangle
\le\|RAR\|_F\|Y-X\|_F.
\]
Moreover,
\[
\|RAR\|_F^2
\le\|AR\|_F^2
=\frac1n\sum_i
\left(\sum_jA_{ij}\right)
\left(n-\sum_jA_{ij}\right)
\le\left\lfloor\frac{n^2}{4}\right\rfloor,
\]
and therefore
\[
 \Phi(B_n)\ge\frac1{\sqrt{\lfloor n^2/4\rfloor}}.
\]

For the reverse inequality read indices modulo $n$. Set
\[
 Z=\{(i,i+s):1\le i\le n,\ 1\le s\le \lfloor n/2\rfloor\},
 \qquad
 F=B_n\cap\bigcap_{(i,j)\in Z}\{X_{ij}=0\}.
\]
For $e=(i,i+s)\in Z$, let $P_e$ be the transposition
$i\leftrightarrow i+s$, except when $n$ is even and $s=n/2$, where we use the
$3$-cycle
\[
 i\mapsto i+n/2,\qquad i+n/2\mapsto i+1,\qquad i+1\mapsto i.
\]
 Each $P_e$ is equal to $1$ at the coordinate indexed by $e$ and equal to $0$ at every coordinate indexed by $Z\setminus\{e\}$. Set
\[
 Y=\frac1{n\lfloor n/2\rfloor}\sum_{e\in Z}P_e.
\]
Thus $Y\in\conv(\vertx(B_n)\setminus F)$ and
$Y_{ij}=1/[n\lfloor n/2\rfloor]$ for $(i,j)\in Z$. Define
\[
X_{ij}=\begin{cases}
0,&(i,j)\in Z,\\[1mm]
Y_{ij}+1/[n\lceil n/2\rceil],&(i,j)\notin Z.
\end{cases}
\]
Each row and column contains $\lfloor n/2\rfloor$ entries of $Z$. Passing from
$Y$ to $X$ therefore removes total mass $1/n$ from each row and column and adds
back the same mass on the remaining $\lceil n/2\rceil$ entries. Hence $X$ is
doubly stochastic and $X_{ij}=0$ for every $(i,j)\in Z$, so $X\in F$. On $Z$ the difference
$Y-X$ equals $1/[n\lfloor n/2\rfloor]$, and off $Z$ it equals
$-1/[n\lceil n/2\rceil]$. Since $X\in F$ and
$Y\in\conv(\vertx(B_n)\setminus F)$,
\[
\begin{aligned}
\Phi(B_n)^2
&\le
n\lfloor n/2\rfloor\left(\frac1{n\lfloor n/2\rfloor}\right)^2
+n\lceil n/2\rceil\left(\frac1{n\lceil n/2\rceil}\right)^2\\
&=\frac1{\lfloor n/2\rfloor\lceil n/2\rceil}\\
&=\frac1{\lfloor n^2/4\rfloor}.
\end{aligned}
\]

\end{proof}


\begin{thebibliography}{10}

\bibitem{BomzeRinaldiZeffiro2024}
Immanuel~M. Bomze, Francesco Rinaldi and Damiano Zeffiro, \textsl{Frank--wolfe
  and friends: a journey into projection-free first-order optimization
  methods}, Annals of Operations Research \textbf{343} (2024), 607--638.

\bibitem{BraunEtAl2025}
G{\'a}bor Braun, Alejandro Carderera, Cyrille~W. Combettes, Hamed Hassani, Amin
  Karbasi, Aryan Mokhtari, and Sebastian Pokutta, \textsl{Conditional gradient
  methods: From core principles to {AI} applications}, MOS-SIAM Series on
  Optimization, vol.~35, Society for Industrial and Applied Mathematics, 2025.

\bibitem{DezaLiuPournin2025}
Antoine Deza, Zhongyuan Liu and Lionel Pournin, \textsl{Small kissing
  polytopes}, Vietnam Journal of Mathematics \textbf{53} (2025), no.~4,
  901--913.

\bibitem{DezaLiuPournin2026}
Antoine Deza, Zhongyuan Liu and Lionel Pournin, \textsl{Kissing polytopes in
  dimension 3}, Experimental Mathematics (2026).

\bibitem{DezaOnnPokuttaPournin2024}
Antoine Deza, Shmuel Onn, Sebastian Pokutta, and Lionel Pournin,
  \textsl{Kissing polytopes}, SIAM Journal on Discrete Mathematics \textbf{38}
  (2024), no.~4, 2643--2664.

\bibitem{IommazzoEtAl2026}
Gabriele Iommazzo, David Mart{\'i}nez-Rubio, Francisco Criado, Elias~Samuel
  Wirth, and Sebastian Pokutta, \textsl{Linear convergence of the
  {Frank}--{Wolfe} algorithm over product polytopes}, Proceedings of the 29th
  International Conference on Artificial Intelligence and Statistics,
  Proceedings of Machine Learning Research, vol. 300, 2026, pp.~1063--1071.

\bibitem{LacosteJulienJaggi2015}
Simon Lacoste-Julien and Martin Jaggi, \textsl{On the global linear convergence
  of {Frank}--{Wolfe} optimization variants}, Advances in Neural Information
  Processing Systems 28, 2015, pp.~496--504.

\bibitem{PenaRodriguez2019}
Javier~F. Pe{\~n}a and Daniel Rodr{\'\i}guez, \textsl{Polytope conditioning and
  linear convergence of the {Frank}--{Wolfe} algorithm}, Mathematics of
  Operations Research \textbf{44} (2019), no.~1, 1--18.

\bibitem{PenaWirth2026}
Javier~F. Pe{\~n}a and Elias Wirth, \textsl{Calculus of the facial distance},
  2026, arXiv:2608.28971v1 [math.OC].

\bibitem{WirthPenaPokutta2026}
Elias Wirth, Javier Pe{\~n}a and Sebastian Pokutta, \textsl{Fast convergence of
  {Frank}--{Wolfe} algorithms on polytopes}, Mathematics of Operations Research
  \textbf{51} (2026), no.~2, 1463--1485.

\end{thebibliography}
\bibliographystyle{ijmart}
\providecommand{\MR}{\relax\ifhmode\unskip\space\fi MR }
\providecommand{\MRhref}[2]{%
  \href{http://www.ams.org/mathscinet-getitem?mr=#1}{#2}
}
\providecommand{\href}[2]{#2}

\end{document}